\newtheorem{theorem}{Theorem}
\newtheorem{lemma}{Lemma}
\newtheorem{proposition}{Proposition}
\newcommand{\Rmnum}[1]{\expandafter\@slowromancap\romannumeral #1@}
\begin{document}

\title{Post-disaster Repair Scheduling in Partially Automated Electricity Distribution Networks}
\author[uw]{Yushi Tan}
\ead{ystan@uw.edu}
\author[ewu]{Arindam~K.~Das}
\ead{adas@ewu.edu}
\author[uw]{Payman Arabshahi}
\ead{payman@ee.washington.edu}
\author[uw]{Daniel~S.~Kirschen}
\ead{kirschen@uw.edu}

\address[uw]{Department of Electrical and Computer Engineering, University of Washington, Seattle, Washington, USA}
\address[ewu]{Department of Electrical Engineering, Eastern Washington University, Washington, USA}

\begin{abstract}
Natural disasters, such as hurricanes, large wind and ice storms, typically require the repair of a large number of components in electricity distribution networks. Since power cannot be restored before the completion of repairs, optimal scheduling of available repair crews to minimize the aggregate duration of customer interruptions reduces the harm done to the affected community. While incorporating a switch on every line of a distribution network would maximize the outage management capability and resilience, cost concerns typically prohibit such an approach. In this work, we consider the fact that the number of switches is much smaller than the number of edges in the distribution network. This generalizes the problem adopted in our previous work~\citep{tan2019scheduling}. Our modeling framework is analogous to a job scheduling problem with \emph{group soft precedence constraints} on parallel identical machines to minimize the total weighted energization time. We propose a linear programming (LP) based list scheduling algorithm and a conversion algorithm and analyze their theoretical performances.
\end{abstract}

\begin{keyword}
Disaster management, Power system resilience, Scheduling, Group soft precedence constraint
\end{keyword}

\maketitle

\section{Introduction}
% review the resilience issues and stuff
Natural disasters have caused major damage to the electricity distribution networks and deprived homes and businesses of electricity for prolonged periods. The resulting damages may also have secondary economic and environmental impact due to unpreparedness for such severe events. The recent Hurrican Harvey affected 2.02 million customers and over 6200 distribution poles were downed or damaged~\citep{nerc2018harvey}. Physical damage to grid components must be repaired before power can be restored~\citep{gridwise2013resilience,nerc2014sandy}. 
%From an operational perspective, approaches to scheduling the available repair crews to minimize the cumulative weighted customer downtime and reduce the harm done to the affected communities have been proposed in~\citep{tan2019scheduling, nurre2012restoring, coffrin2014transmission}. To the best of our knowledge, all prior work implicitly consider a fully automated distribution networks where a switch is installed for each edge. 

Distribution automation (DA) is a key component of smart grids and grid modernization. DA uses digital sensors and switches with advanced control and communication technologies to automate feeder switching; voltage and equipment health monitoring; and outage, voltage and reactive power management~\citep{doe2016da}. DA technologies, can improve distribution system resilience to extreme weather events, mostly because of the ability to isolate and locate faults~\citep{doe2016da}. Such capability is made possible by the utilization of remote controlled switches (RCS) and communication networks. 

In the absence of any isolating switch in the network, power cannot be restored to any part of the network until all repairs are completed. Presence of a switch on every edge of a distribution network, on the other hand, would allow for immediate restoration of power to the  unaffected nodes, after the damaged lines have been isolated by appropriately opening some switches. Cost concerns, however, typically prohibit such an approach. The American Recovery and Reinvestment Act (ARRA) of 2009 provided DOE with \$7.9 billion to invest in smart grid projects, with more than a quarter on the deployment of DA~\citep{doe2016da}. This led to the upgrade of 6,500 distribution circuits out of more than 200,000 in U.S.

In this work, we therefore consider that the number of switches is much smaller than the number of edges in the network. % Appropriately opening some of these switches allows the network to be \emph{islanded} in the event of disruptions and power can then be restored to portions of the network in a phased manner after completion of suitable repairs. %The topology of island $i$ is denoted by the graph $G_i = (N_i,L_i)$, where $N_i$ and $L_i$ are the sets of nodes and edges in island $i$ respectively.
Once an outage occurs, distribution system operators would open the (possibly automatic) feeder switch to avoid the danger of electrocution by loose ends downstream. 
After finishing the repairs of the damaged components and making sure there is no safety concern, the upstream switch can be closed, energizing the customers in between. Therefore, the benefit of RCS is completely undermined by the potential risk of electrical safety and the functionality RCS's is no different than that of manual switches. As a result, the impact of DA on resilience is mainly determined by the number of switches, no matter whether they are remote-controlled or manual. 
\subsection{Related Work}

This work falls within the scope of power system resilience. Approaches have been proposed for scheduling the available repair crews in order to minimize the cumulative duration of customer interruptions, and applied to general critical infrastructure~\citep{nojima1992optimal, lee2007restoration}, distribution networks~\citep{tan2019scheduling, arif2017cooptimization} and transmission networks~\citep{xu2007optimizing, nurre2012restoring, coffrin2014transmission, Nurre:2014tiba, van2015transmission}. To the best of our knowledge, all previous work implicitly consider the model of a power network where a switch is installed on each edge. Although the aforementioned work may have different formulations under various assumptions and practical concerns, they were shown to be $\mathcal{NP}$-hard in~\citep{tan2019scheduling, Nurre:2014tiba}. Many studies rely on mixed integer linear programming to solve the problem~\citep{xu2007optimizing, nurre2012restoring, lee2007restoration, coffrin2014transmission, van2015transmission} and some develop heuristics to speed up the computation~\citep{nurre2012restoring, arif2017cooptimization}. Due to computational concerns about solving such problems in real time, they are also modeled in the context of parallel machine scheduling and solved using approximation algorithms~\citep{tan2019scheduling} and heuristics~\citep{Nurre:2014tiba}.

% how the problem is different from traditional scheduling problems
Parallel machine scheduling has been studied in many applications. The problem resembles the classical parallel machine scheduling problems that considers precedence constraints and minimizes the total weighted completion time. Therefore it is necessary to clarify the differences that makes the proposed problem new.
As we will explain, the concept of energization time is the result of network flow. By taking advantage of tree topology, it relates to the completion time through Proposition~\ref{prop:EJCj}. The jobs can be scheduled in any order without any idleness, unlike the precedence constraints. But the schedules that do not follow the hierarchical relationship by tree topology might get a large energization time, or equivalently a large penalty of delay. 

% the difference between batch scheduling and scheduling groups of jobs in the previous literature

\subsection{Our Results}

We define the problem of post-disaster repairs in partially automated distribution networks in Section~\ref{sec:probDef}. After introducing the concepts of energization time and group soft precedence constraints, we model the problem by parallel machine scheduling problem with group soft precedence constraints to minimize the total weighted energization time. We present two list scheduling algorithms with performance guarantees. In Section~\ref{sec:LPlist}, the list is obtained from the results of the completion time vector linear relaxation and the algorithm has a performance guarantee of $2$. In Section~\ref{sec:swconv}, we first show that the sequencing counterpart can be solved optimally in polynomial time. The second algorithm uses the optimal sequence as the priority list and the approximation ratio is shown to be $2 - \frac{1}{m}$. 

\section{Problem Definition}
\label{sec:probDef}
A distribution network can be modeled by a tree network $G = (N,L)$, where $N$ is the set of nodes and $L$ is the set of edges. The root of $G$ is the only source node. Each node $n$ is associated with a positive weight, $w_n$, indicating its importance. Node weights can depend on multiple factors, including but not limited to, the amount of load connected to it, the type of load served, and interdependence with other critical infrastructures. 
%For example, re-energizing a node supplying a major hospital should receive a higher priority than a node supplying a similar amount of residential load. Similarly, it is conceivable that a node that provides electricity to a water sanitation plant would be assigned a higher priority. These priorities are assigned by the utilities and their determination is outside the scope of this paper. We simply assume knowledge of the $w_n$'s in this paper.
An edge in $L$ represents a distribution feeder or some other connecting component. Figure~\ref{fig:ieee123} shows a modified IEEE 123 node test feeder network, divided into 7 sub-networks (islands) by 6 switches, each illustrated using a different color.
\begin{figure}[htbp]
\centering
\subfloat[Before partitioning]{\includegraphics[width=0.45\columnwidth]{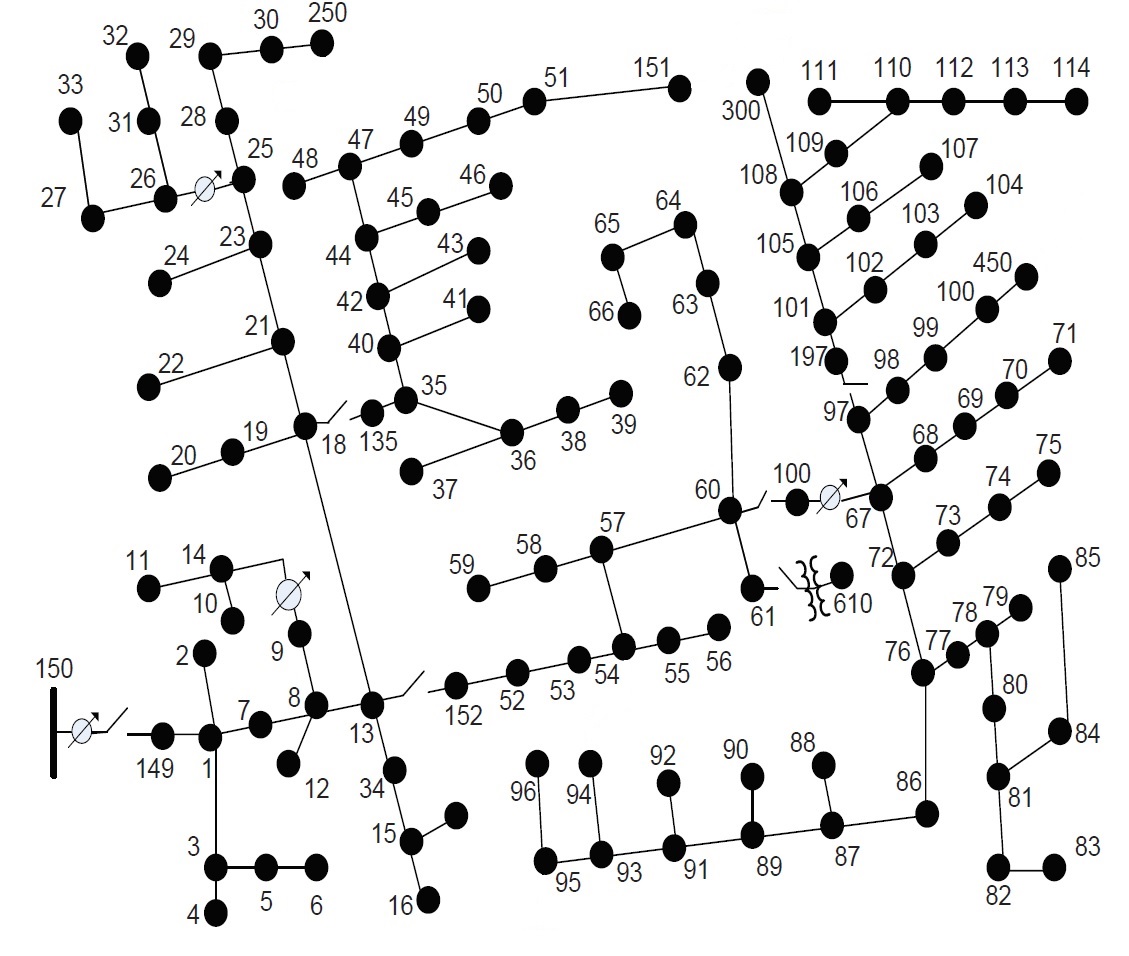}}
\subfloat[After partitioning]{\includegraphics[trim = 160 0 140 0, clip, width=0.45\columnwidth]{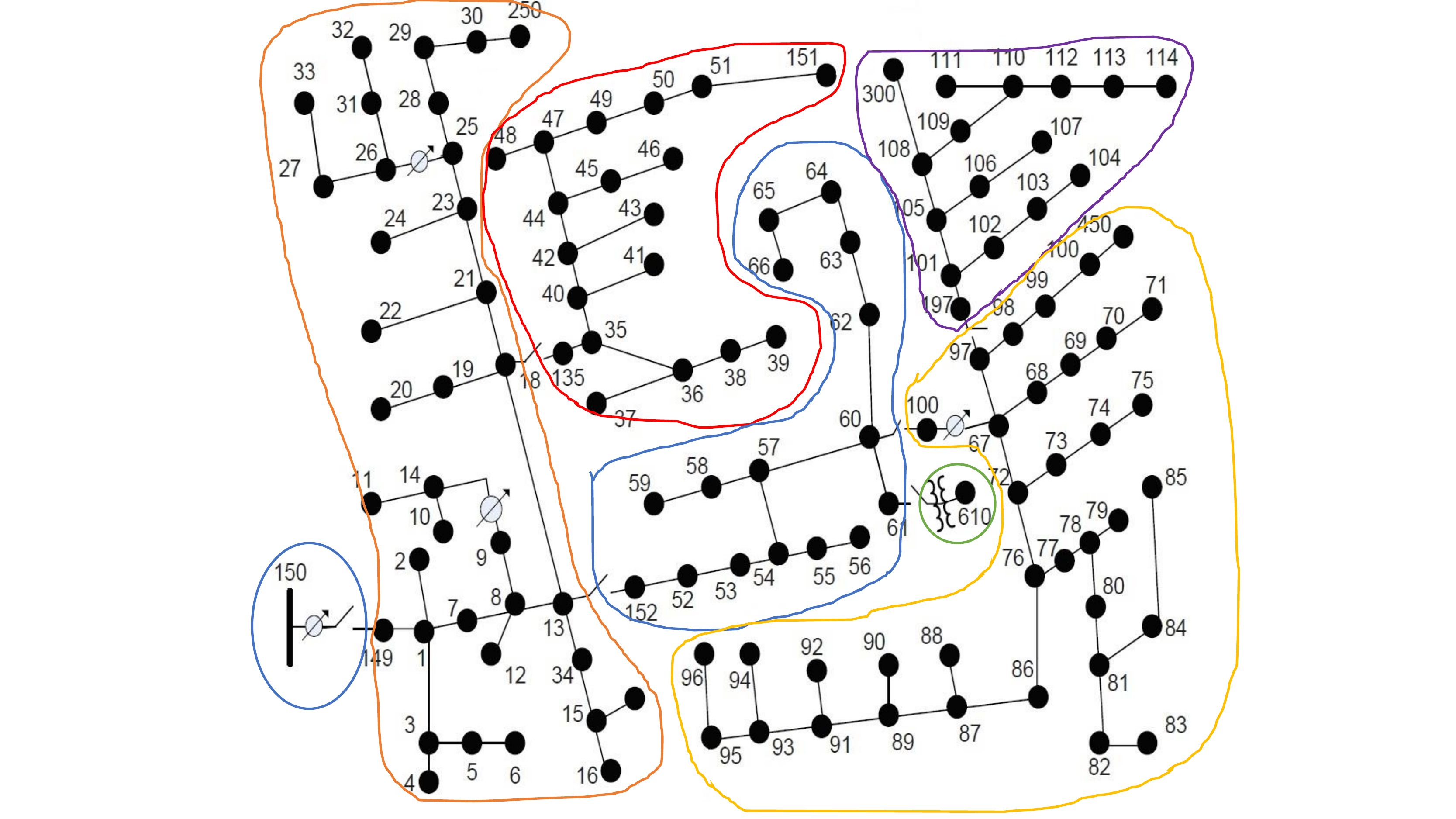}}
\caption{IEEE 123 node test feeder}
\label{fig:ieee123}
\end{figure}

Let $\mathcal{J}$ denote the set of damaged edges in $G$ and $J_i$ denote the set of damaged edges in island $i$. We will slightly abuse the notation $\mathcal{J}$ as the set of islands as well. For any disruption in island $i$, all nodes in $i$, as well as all nodes in islands downstream of $i$, will lose power due to safety concerns and lack of electrical connectivity. Let $p_j$ denote the repair time for any damaged edge $j$. We assume perfect knowledge of the set $\mathcal{J}$ and the corresponding repair times. Additionally, we assume that each job is tended to by exactly one repair team.

Instead of a rigorous power flow model, we model network connectivity using a simple network flow model, i.e., as long as a sink node is connected to the source, we assume that all loads connected to this node can be supplied without violating any voltage constraint. Without loss of generality (w.l.o.g), we assume that every edge in $G$ is damaged. If not, the undamaged edges can be assigned a repair time of $0$ so that $\mathcal{J} = L$.

%Instead of a rigorous power flow model, we model network connectivity using a simple network flow model, i.e., as long as a sink node is connected to the source, we assume that all loads connected to this node can be supplied without violating the voltage constraint. \todo{Without the necessity of modeling the voltage on each node, the nodes connected by neighboring intact will always be energized at the same time and can then be contracted into one node.} \textcolor{red}{AKD: What does the previous  sentence mean?} Therefore, without loss of generality, we assume every edge in $G$ is damaged. \textcolor{red}{AKD: I am not seeing how the penultimate sentence leads to the last sentence (usage of `therefore')' -- Two comments at the same time: it is related to the voltage. Now that we do not have to model voltage on each node, we could contract the graph. I should also add one more argument on the sufficient capacity. Change made.}

Two different time vectors are of interest: (i) a vector of completion times of line repairs, denoted by $C_j$'s, and (ii) a vector of energization times of nodes, denoted by $E_n$'s. As indicated earlier, all nodes in an island should be energized at the same time out of electrical safety concerns. Therefore, we can ascribe an energization time to \emph{island} $i$, $E_{J_i}$ and $E_{J_i} = E_n$, $\forall n \in N_i$.
\begin{figure}[h]
    \centering
    \includegraphics[width = 0.5\columnwidth]{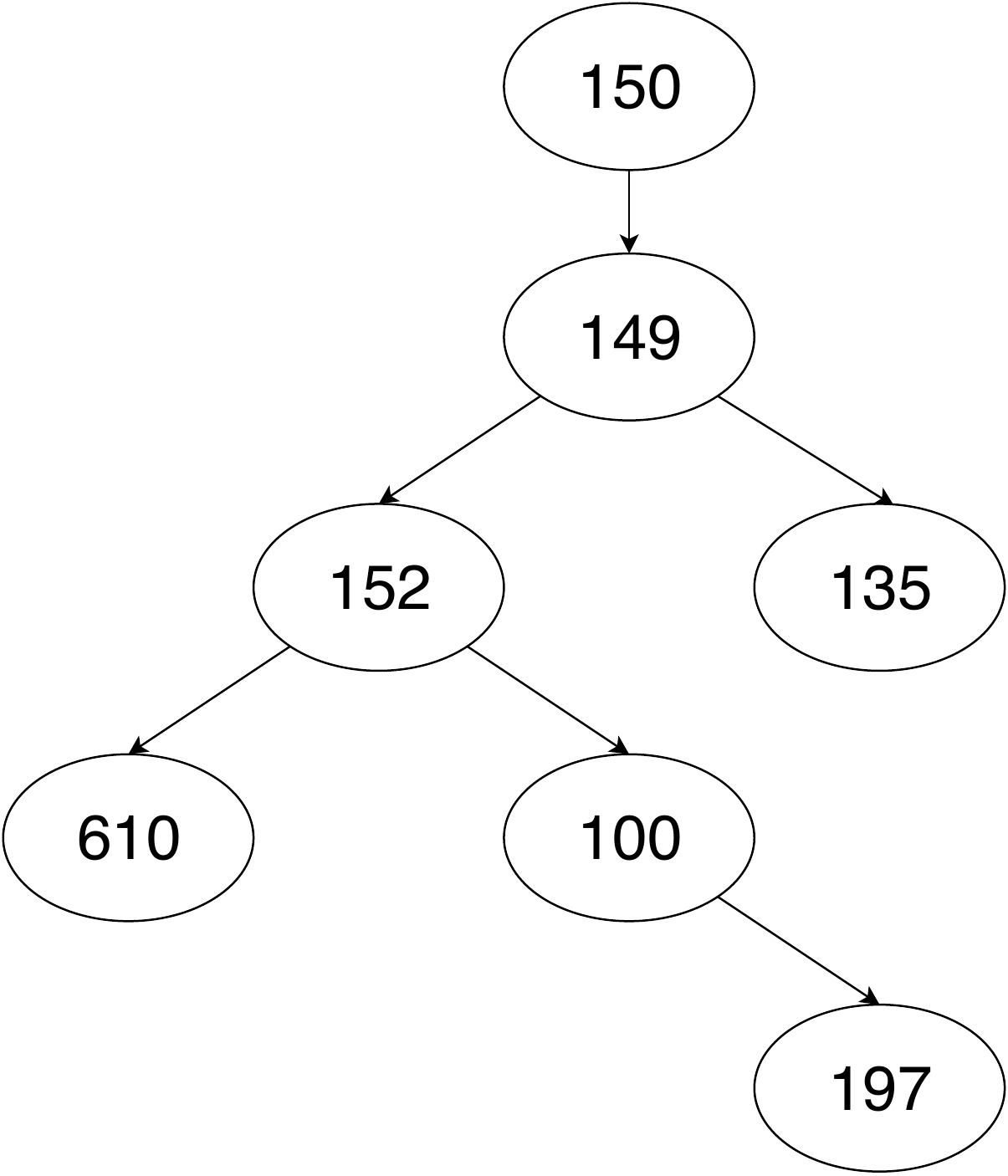}
    \caption{Group soft precedence graph, $P$, for the modified IEEE 123-node feeder shown in Fig.~\ref{fig:ieee123}(b).}
    \label{fig:grp_soft_prec_graph}
\end{figure}

Necessitated by network flow and energization constraints on islands, we now establish the notion of a \emph{group soft precedence constraint}, $J_x \prec_S J_y$, which stipulates  that island $y$ cannot be energized unless island $x$ is energized since $y$ is downstream of $x$ (in a radial topology with a single source, notions of `upstream' and `downstream' are unambiguous), or equivalently, $E_{J_y} \geq E_{J_x}$. The number of group soft precedence constraints is equal to the number of switches in the network. The corresponding group soft precedence graph, $P$, can then be obtained by further contracting non-switch lines into a supernode that represents an island. Fig.~\ref{fig:grp_soft_prec_graph} shows the group soft precedence graph for the modified IEEE 123 node test feeder in Fig.~\ref{fig:ieee123}(b). For notational simplicity, the set of soft precedence constraints represented by the precedence graph will also be denoted by $P$. These soft precedence constraints can be used to characterize the relationship between the  completion times of all jobs in some island $J$ and the energization time of island $J$, as follows.
\begin{proposition}
Let $E_J$ be the energization time of island $J$, $C_J$ the completion time of the last finished job in island $J$ (or equivalently the completion time of island $J$) and $C_j$ be the completion time of job $j$. Then:
\begin{align}
E_J \, = \underset{J^{\prime} \, \preceq_S \, J}{\max} C_{J^\prime} = \, \underset{J^{\prime} \, \preceq_S \, J}{\max} \,\; \underset{j \, \in \,  J^{\prime}}{\max} \;\, C_j
\end{align}
\label{prop:EJCj}
\end{proposition}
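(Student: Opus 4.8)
The plan is to exploit the fact that contracting the non-switch edges of a tree yields another tree: the group soft precedence graph $P$ is rooted at the island containing the single source, and every island $J$ other than the root has a unique parent island $J_p$ (the island immediately upstream across one switch). I would then prove the identity by induction on the depth of $J$ in $P$, using the local recursion
\begin{equation}
E_J = \max\left(C_J,\, E_{J_p}\right),
\label{eq:localrec}
\end{equation}
and finally invoke $C_{J'} = \max_{j \in J'} C_j$ to obtain the second equality for free.

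Before the induction, I would justify \eqref{eq:localrec} from the network flow and electrical-safety model. Under the single-source radial topology, island $J$ draws power along the unique path from the root, and the last switch on this path separates $J$ from its parent $J_p$. Thus $J$ can be energized only once (i) all damaged edges inside $J$ are repaired, which happens at time $C_J = \max_{j \in J} C_j$, so that $J$ is internally connected, and (ii) its parent $J_p$ is itself energized, at time $E_{J_p}$, so that power actually reaches the intervening switch. Conversely, as soon as both conditions hold the switch can be closed and $J$ energized, giving equality in \eqref{eq:localrec}.

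With \eqref{eq:localrec} in hand the induction is routine. For the base case, the root island $J_0$ has no parent and no upstream dependency, so it is energized precisely when its own repairs finish: $E_{J_0} = C_{J_0} = \max_{j \in J_0} C_j$. For the inductive step, assume $E_{J_p} = \max_{J' \preceq_S J_p} C_{J'}$. Since the ancestors of $J$ in $P$ are exactly $J$ together with $J_p$ and the ancestors of $J_p$, i.e. $\{J' : J' \preceq_S J\} = \{J\} \cup \{J' : J' \preceq_S J_p\}$, substituting into \eqref{eq:localrec} gives
\begin{equation}
E_J = \max\left(C_J,\, \max_{J' \preceq_S J_p} C_{J'}\right) = \max_{J' \preceq_S J} C_{J'}.
\end{equation}
Expanding each $C_{J'}$ as $\max_{j \in J'} C_j$ then yields the second equality in the statement.

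I expect the only real obstacle to be the careful justification of \eqref{eq:localrec}, since it is the single place where the physical model (network flow, safety-driven isolation, and the single-source radial topology) enters; everything afterward is combinatorial bookkeeping on the tree. An equivalent and perhaps cleaner route that sidesteps the recursion would be a direct argument: because the source-to-$J$ path is unique in a tree, $J$ is connected to the source by repaired lines exactly when every edge on that path has been repaired, and the edges on that path are precisely those in $\bigcup_{J' \preceq_S J} J'$; hence the earliest feasible energization time of $J$ is $\max_{J' \preceq_S J} \max_{j \in J'} C_j$, establishing both equalities at once.
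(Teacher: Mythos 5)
The paper states Proposition~\ref{prop:EJCj} without any proof at all: it is presented as an immediate consequence of the modeling assumptions (the network-flow connectivity model, the safety requirement that an entire island energizes at once, and the definition of the group soft precedence constraints). Your inductive argument via the local recursion $E_J = \max(C_J, E_{J_p})$ is therefore not a different route so much as a formalization of what the authors leave implicit, and it is correct: the recursion is exactly the content of constraints of the form~(\ref{eqn:swLPcnstr2}) and~(\ref{eqn:swLPcnstr3}) taken with equality at the earliest feasible time, the tree structure of $P$ guarantees a unique parent, and unrolling the recursion along the ancestor chain gives $\max_{J' \preceq_S J} C_{J'}$. One caution about your closing ``cleaner route'': the claim that the edges on the unique source-to-$J$ path are \emph{precisely} those in $\bigcup_{J' \preceq_S J} J'$ is false in general, since an ancestor island may contain branches that hang off the path; mere electrical connectivity of $J$ to the source would only require the on-path edges to be repaired. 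What forces \emph{every} edge of every ancestor island into the maximum is the safety rule that an island cannot be energized until all of its internal repairs are complete (so that $C_{J'} = \max_{j \in J'} C_j$ over the whole island), which your recursion-based argument invokes correctly but your direct argument silently replaces with a connectivity claim. Keep the inductive version as the proof and either drop the direct argument or restate it as: $J$ is energized at the earliest time when every ancestor island (itself included) has completed all of its repairs, which is $\max_{J' \preceq_S J} \max_{j \in J'} C_j$ by definition of island completion.
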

%

%\textcolor{red}{AKD: This para feels abrupt. Probably move it to Section 2, between eqns. (2) and (3a). Will edit later. I also suggest moving the Proposition above to the beginning of Section 2.} 
As discussed above, the node weights can be converted to an equivalent set of weights on the edges. We can therefore define the weight of an island $J$ as follows: $\omega_J := \sum_{j \in J} w_j$. Assuming $m$ identical repair teams, our scheduling framework can be modeled as a parallel machine scheduling problem with group soft precedence constraints in order to minimize the total weighted group energization time, $H := \sum \omega_J E_J$, henceforth referred to as the cost function, which can be interpreted as the \emph{aggregate harm} to the community due to loss of electrical power. A discussion on how this cost function relates to a measure of infrastructure resilience can be found in~\citep{tan2019scheduling}. 
%for discussions on how this cost function relates to the measure of infrastructure resilience.
%Similarly, the weight of node $t(l)$, $w_{t(l)}$, can be interpreted as a weight on the line $l$, $w_l$ and 

\section{LP-based list scheduling algorithm}
\label{sec:LPlist}

In this section, we discuss a scheduling algorithm derived from the LP-relaxation based on a vector of completion times $C$. For general single machine scheduling problems, a set of valid inequalities was shown in \citep{queyranne1993structure} to be the convex hull of completion time vectors. A slightly modified set of valid inequalities for general parallel machine scheduling was proposed in~\citep{schulz1996polytopes}, as stated below:
\begin{theorem}{\citep{schulz1996polytopes}}
The completion time vector $C$ of every feasible schedule on $m$ identical parallel machines satisfies the following inequalities:
\begin{align}
\sum_{j \in A} p_j C_j \geq f(A) := \frac{1}{2m}\left(\sum_{j \in A} p_j\right)^2 + \frac{1}{2} \, \sum_{j \in A} p_j^2, \;\, \forall A \in \mathcal{J}
\label{eqn:validineqparallelschulz}
\end{align}
\label{thm:parallelvalidineq}
\end{theorem}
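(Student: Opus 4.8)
The plan is to fix the subset $A$ and, rather than bounding each individual completion time $C_j$ (which cannot be done job-by-job, since the inequality is genuinely an aggregate one), to reformulate the left-hand side through the \emph{load profile} of the jobs in $A$. Concretely, I would introduce $L(t) := \sum_{j \in A}\mathbf{1}\{C_j - p_j \le t < C_j\}$, the number of jobs of $A$ being processed at time $t$. Since each job occupies one machine throughout its processing interval $[C_j-p_j, C_j]$ and there are only $m$ machines, at most $m$ jobs of $A$ can run simultaneously, so $0 \le L(t) \le m$ for all $t$; moreover $\int_0^\infty L(t)\,dt = \sum_{j\in A}p_j =: P$. These two facts about $L$ — a hard cap of $m$ and a fixed total mass $P$ — are the only properties of the schedule the proof will use.

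The key step is an exact identity. Using $\int_{C_j-p_j}^{C_j} t\,dt = p_j C_j - \tfrac12 p_j^2$ and summing over $j \in A$ gives
\[
\int_0^\infty t\,L(t)\,dt = \sum_{j\in A}\left(p_j C_j - \tfrac12 p_j^2\right),
\]
equivalently $\sum_{j\in A} p_j C_j = \int_0^\infty t\,L(t)\,dt + \tfrac12\sum_{j\in A}p_j^2$. This already accounts exactly for the additive term $\tfrac12\sum_{j\in A}p_j^2$ appearing in $f(A)$, so it remains only to prove the single inequality $\int_0^\infty t\,L(t)\,dt \ge \tfrac{1}{2m}P^2$.

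For this I would argue by rearrangement: among all profiles with $0\le L\le m$ and $\int_0^\infty L = P$, the time-weighted integral $\int_0^\infty t\,L(t)\,dt$ is minimized by pushing all mass to the earliest times, i.e. by the box profile equal to $m$ on $[0,P/m]$ and $0$ afterwards, whose value is $\int_0^{P/m} m\,t\,dt = \tfrac{P^2}{2m}$. Substituting this bound into the identity then yields $\sum_{j\in A}p_j C_j \ge \tfrac{1}{2m}P^2 + \tfrac12\sum_{j\in A}p_j^2 = f(A)$, which is the claim.

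The main obstacle is making the rearrangement step rigorous, since the schedule's profile $L$ need not be monotone and may carry load at arbitrarily late times. I would settle it with a one-line exchange (bathtub) argument: the integrand of $\int_0^\infty (t - \tfrac{P}{m})(L(t) - m\,\mathbf{1}\{t\le P/m\})\,dt$ is nonnegative for every $t$, because both factors change sign precisely at $t = P/m$ (for $t\le P/m$ we have $t - \tfrac{P}{m}\le 0$ and $L(t) - m \le 0$, while for $t > P/m$ both are nonnegative). Hence that integral is $\ge 0$; since $\int_0^\infty (L(t) - m\,\mathbf{1}\{t\le P/m\})\,dt = P - P = 0$, it follows that $\int_0^\infty t\,L(t)\,dt \ge \int_0^\infty t\,m\,\mathbf{1}\{t\le P/m\}\,dt = \tfrac{P^2}{2m}$, completing the proof. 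I note that this argument uses only non-preemptiveness and the $m$-machine capacity, never any specific scheduling rule — exactly what is required for the inequality to hold for \emph{every} feasible schedule.
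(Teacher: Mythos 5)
Your proof is correct, and it is worth noting at the outset that the paper itself offers no proof of this statement: Theorem~\ref{thm:parallelvalidineq} is imported verbatim from \citep{schulz1996polytopes}, so the only comparison available is with the argument in that reference. The classical proof there works machine by machine: partition $A$ into the subsets $A_1,\dots,A_m$ assigned to each machine, apply the single-machine inequality $\sum_{j\in A_i}p_jC_j\ge\frac12\bigl(\sum_{j\in A_i}p_j\bigr)^2+\frac12\sum_{j\in A_i}p_j^2$ on each (which follows from $C_j\ge\sum_{k}p_k$ summed over the jobs of $A_i$ preceding $j$ on that machine), and then combine via the convexity estimate $\sum_i\bigl(\sum_{j\in A_i}p_j\bigr)^2\ge\frac1m\bigl(\sum_{j\in A}p_j\bigr)^2$. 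Your route is genuinely different: the exact identity $\sum_{j\in A}p_jC_j=\int_0^\infty tL(t)\,dt+\frac12\sum_{j\in A}p_j^2$ isolates the additive term for free, and the bathtub exchange $\int_0^\infty\bigl(t-\frac{P}{m}\bigr)\bigl(L(t)-m\,\mathbf{1}\{t\le P/m\}\bigr)\,dt\ge0$ cleanly delivers $\int_0^\infty tL(t)\,dt\ge\frac{P^2}{2m}$; all steps check out (the integrand is pointwise nonnegative, the cross terms cancel because both profiles have total mass $P$, and feasibility gives exactly $0\le L\le m$ and nonnegative start times). What your version buys is that it never references the machine assignment, only the aggregate capacity bound, so it extends verbatim to preemptive schedules (with $C_j$ replaced by mean busy times) and it connects directly to the quantity $p_jM_j=p_j(C_j-p_j/2)=\int_{S_j}^{C_j}t\,dt$ that the paper later uses in eqn.~(\ref{lemma4_impeq2}); what the classical decomposition buys is elementary algebra with no integrals. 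Either way, your argument is a complete and correct proof of the theorem.
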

%
%With the previously defined group soft precedence constraints and the valid inequalities for parallel machine scheduling,
\noindent We now propose the following LP-relaxation model: 
\begin{subequations}
\begin{align}
\underset{C, E}{\text{minimize}} \quad &\sum_{J \in \mathcal{J}} \omega_J E_J \\
\text{subject to} &\quad C_j \geq p_j, \; \, \forall j \in \mathcal{J} \label{eqn:swLPcnstr1}\\
&E_J \geq C_j, \; \, \forall j \in J \in \mathcal{J} \label{eqn:swLPcnstr2} \\
&E_J \geq E_{J^{\prime}}, \; \forall (J^{\prime} \rightarrow J) \in P \label{eqn:swLPcnstr3} \\
&\sum_{j \in A} p_j \, C_j \geq f(A), \; \ \forall A \subset \mathcal{J} \label{eqn:swLPcnstr4}
\end{align}
\end{subequations}
%
%\textcolor{red}{AKD: $\mathcal{J}$ is the set of all islands. Is that what you meant in the last eqn?}

Eqn.~(\ref{eqn:swLPcnstr1}) constrains the completion time of any damaged line to be lower bounded by its repair time, eqn.~(\ref{eqn:swLPcnstr2}) ensures that an island cannot be energized until all repairs in that island are completed, eqn.~(\ref{eqn:swLPcnstr3}) models the group soft precedence constraints, and eqn.~(\ref{eqn:swLPcnstr4}) describes the set of valid inequalities stated in Theorem~\ref{thm:parallelvalidineq}.
%
%\textcolor{red}{AKD: So the $M_{j}^{LP}$'s are not based on the optimal LP solution? Looks like that since you have defined $C^{LP}$ to be any feasible completion time vector, not the LP-optimal completion time vector. -- you are right. I think I need to get it corrected.}

%, i.e., it is  a solution to the constraints~(\ref{eqn:swLPcnstr1}) - (\ref{eqn:swLPcnstr4})
\begin{algorithm}
\caption{}
\label{alg:swLP_listSched}
\begin{algorithmic}
\STATE \textit{Let $C^{LP}$ denote the LP-optimal completion time vector. Define the LP mid points as $M_{j}^{LP} := C_{j}^{LP} - p_{j}/2, \ \forall j \in \mathcal{J}$. Create a job priority list by sorting $M_{j}^{LP}$'s in an ascending order. Ties are broken according to the group soft precedence constraints or arbitrarily if there is none. Whenever a crew is free, assign to it the next job from the priority list.}
\end{algorithmic}
\end{algorithm}

Next, we discuss a list scheduling algorithm based on the LP-midpoints, similar to that in~\citep{queyranne2006approximation}. Assume w.l.o.g that:
\begin{align}
    M_1^{LP} \leq M_2^{LP} \leq \cdots \leq M_{\vert \mathcal{J} \vert}^{LP}, 
    \label{eqn:Mlistorder}
\end{align}

This implies that the repair list is $(1, \dotsb, \vert L \vert)$. We will start with analyzing the basic properties of the list scheduling algorithm and the LP relaxation. Let $S^H$ and $C^{H}$ denote the vectors of starting times and completion times of jobs respectively, resulting from the list scheduling algorithm in Algorithm~\ref{alg:swLP_listSched}. 

Consider any job $j$ from that list. Since jobs $1$ to $j - 1$ are scheduled in order with no idle times in between, the start time of job $j$ satisfies:
\begin{align}
S_j^H \, &\leq \, \frac{1}{m}\sum_{i = 1}^{j - 1} p_i, 
\label{eqn:S<=sump}
\end{align}
With $M_{j}^{LP} = C_{j}^{LP} - p_{j}/2$, we rewrite eqn.~(\ref{eqn:swLPcnstr4}) as follows: %\textcolor{red}{AKD: $\mathcal{J}$ is the set of all islands. is $\forall A \subset \mathcal{J}$ correct?} 
\begin{align}
\sum_{j \in A} p_j \, M_j^{LP} \geq \frac{1}{2m}\left(\sum_{j \in A} p_j\right)^2, \;\, \forall A \subset \mathcal{J}
\label{lemma4_impeq2}
\end{align}
Now, for every $j \in \mathcal{J}$, we have:
%let $A$ denote the set of all jobs in the schedule up to and including \textcolor{red}{AKD: but not instead of and?} $j$; i.e., $A = \{1, 2, \cdots, j \}$. Then:
%Is $A$ an ordered list or just a set? -- I thought in the last CINS workshop, we agreed that () denotes an ordered list and \{\} denotes a set.
%
\begin{align}
\left(\sum_{i = 1}^j p_i\right) M_j^{LP} \ \geq \ \sum_{i = 1}^{j} p_i \, M_i^{LP} \ \geq \ \frac{1}{2m}\left(\sum_{i = 1}^{j} p_i\right)^2 \, ,
\label{lemma4_impeq3}
\end{align}
where the first inequality follows from  eqn.~(\ref{eqn:Mlistorder}) and the second inequality follows from eqn.~(\ref{lemma4_impeq2}) with $A = \{1, 2, \cdots, j \}$. Dividing both sides of eqn.~(\ref{lemma4_impeq3}) by $\sum_{i = 1}^{j} p_i$ and applying eqn.~(\ref{eqn:S<=sump}), it follows that:
\begin{align}
    S_j^{H} \leq \frac{1}{m}\sum_{i = 1}^{j - 1} p_i \leq \frac{1}{m}\sum_{i = 1}^{j} p_i \leq 2 M_j^{LP}
\end{align}
Consequently, 
\begin{align}
C_j^H = S_j^H + p_j \leq 2 M_j^{LP} + p_j = 2 \, C_j^{LP},
\end{align}
where the last equality follows from the definition of the LP-midpoints $M_{j}^{LP} := C_{j}^{LP} - p_{j}/2$. Therefore, by Proposition~\ref{prop:EJCj} and constraints~(\ref{eqn:swLPcnstr2}) and~(\ref{eqn:swLPcnstr3}), for all $J \in \mathcal{J}$, we have:
\begin{align}
E_J^H &= \underset{J^{\prime} \preceq J}{\max} \,\; \underset{j \in J^{\prime}}{\max} \;\, C_j^H \\
& \leq 2 \; \underset{J^{\prime} \preceq J}{\max} \,\; \underset{j \in J^{\prime}}{\max} \;\, C_j^{LP} \\
&= 2E_J^{LP}
\end{align}
\begin{theorem}
Algorithm~\ref{alg:swLP_listSched} is a 2-approximation.
\end{theorem}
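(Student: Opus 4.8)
The plan is to lift the per-island bound just established immediately above the statement, namely $E_J^H \leq 2 E_J^{LP}$ for every island $J \in \mathcal{J}$, to a bound on the full objective, and then to invoke the fact that the LP is a relaxation of the scheduling problem. The entire analytical burden has already been discharged in the derivation preceding the statement, so the proof is essentially a two-line aggregation argument.

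First I would multiply the per-island inequality by the island weight $\omega_J$ and sum over all islands. Since $\omega_J = \sum_{j \in J} w_j \geq 0$, summing preserves the inequality and gives
\begin{align}
H^H = \sum_{J \in \mathcal{J}} \omega_J E_J^H \leq 2 \sum_{J \in \mathcal{J}} \omega_J E_J^{LP} = 2\, H^{LP},
\end{align}
where $H^{LP}$ denotes the optimal value of the LP relaxation and $H^H$ the cost of the schedule returned by Algorithm~\ref{alg:swLP_listSched}.

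Second, I would argue that $H^{LP}$ is a valid lower bound on the true optimum $H^{OPT}$. For this it suffices to verify that the completion-time/energization-time vector $(C^\ast, E^\ast)$ induced by any optimal feasible schedule satisfies every constraint of the LP. Constraint~(\ref{eqn:swLPcnstr1}) holds because a job cannot complete before its processing time has elapsed; constraints~(\ref{eqn:swLPcnstr2}) and~(\ref{eqn:swLPcnstr3}) hold by the definition of the island energization time together with Proposition~\ref{prop:EJCj}; and constraint~(\ref{eqn:swLPcnstr4}) holds by Theorem~\ref{thm:parallelvalidineq}, which asserts precisely that the valid inequalities are satisfied by the completion-time vector of every feasible schedule on $m$ identical machines. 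Hence $(C^\ast, E^\ast)$ is LP-feasible, and since the LP minimizes the same objective over a superset of the attainable vectors, $H^{LP} \leq H^{OPT}$. Chaining the two steps yields $H^H \leq 2 H^{LP} \leq 2 H^{OPT}$, the claimed guarantee.

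I do not expect a genuine obstacle here, as the work is front-loaded into the inequalities derived before the statement. The only point requiring care is the relaxation argument: one must confirm that the Schulz inequalities of Theorem~\ref{thm:parallelvalidineq} are asserted for the completion-time vector of an \emph{arbitrary} feasible schedule, not merely for LP-feasible vectors, so that they are legitimately satisfied by the integral optimum and the bound $H^{LP} \leq H^{OPT}$ is sound. Since this is exactly the content of that theorem, the step closes cleanly.
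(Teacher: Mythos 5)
Your proposal is correct and follows exactly the paper's argument: it aggregates the per-island bound $E_J^H \leq 2E_J^{LP}$ over the weighted objective and combines it with the fact that the LP is a relaxation, so $H^{LP} \leq H^{OPT}$. The paper states this in one line; you merely spell out the constraint-by-constraint verification that the optimal schedule is LP-feasible, which is a harmless elaboration of the same route.
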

\begin{proof}
It follows trivially from $E_J^H \leq 2 \, E_J^{LP}$ for all $J \in \mathcal{J}$ and the fact that the LP provides a lower bound on the cost of the optimal schedule.
\end{proof}
There can be exponentially many constraint~(\ref{eqn:swLPcnstr4}) in LP relaxation. The separation problem for these inequalities can be solved in polynomial time using the ellipsoid method~\citep{queyranne1993structure,schulz1996polytopes}. 
\section{A Conversion Algorithm}
\label{sec:swconv}
In this section, we first analyze the repair sequence with a single crew to derive some important insights into the general structure of the multi-crew scheduling problem. Subsequently, we convert the optimal one-crew sequence into a multi-crew schedule with a bounded performance.

\subsection{The case with one repair team, $m=1$}
First, we claim that the sequencing counterpart can be solved optimally in polynomial time by its equivalency with a known scheduling problem $1 \mid outtree \mid \sum w_{j} \, C_{j}$.
\begin{lemma}
Single crew repair scheduling in partially automated distribution networks is equivalent to $1 \mid outtree \mid \sum w_{j} \, C_{j}$, where the outtree precedences are given in the group soft precedence constraint graph $P$.
\label{lemma:swsingleequiv}
\end{lemma}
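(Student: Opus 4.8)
The plan is to exhibit a value- and solution-preserving correspondence between the single-crew repair problem and $1\mid outtree\mid \sum w_j C_j$, where each island is treated as a single aggregated job. Two elementary observations drive the argument. First, on a single machine an optimal schedule never idles, so the only freedom is the order in which work is done. Second, by Proposition~\ref{prop:EJCj} we always have $E_J = \max_{J'\preceq_S J} C_{J'} \ge C_J$, with equality precisely when every ancestor of $J$ in $P$ is completed no later than $J$ itself; thus a schedule that respects the precedences of $P$ makes the energization objective $\sum_J \omega_J E_J$ collapse to the weighted completion-time objective $\sum_J \omega_J C_J$. The whole proof amounts to showing that an optimal repair schedule may be taken to respect $P$, at which point it is literally a feasible schedule for $1\mid outtree\mid \sum w_j C_j$ of identical cost.

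First I would reduce to schedules in which the edges of each island are processed contiguously, so that an island $J$ becomes a single job of length $p_J := \sum_{j\in J} p_j$ and weight $\omega_J$. Given any schedule, order the islands by completion time $C_{J_{(1)}}\le\cdots\le C_{J_{(K)}}$ and reprocess them in this order, each island contiguously. Because the machine is never idle and, by time $C_{J_{(k)}}$ in the original schedule, all edges of $J_{(1)},\dots,J_{(k)}$ are already finished, the new completion time $\sum_{l\le k} p_{J_{(l)}}$ of $J_{(k)}$ is no larger than its old one. Since every $C_J$ weakly decreases and each $E_J$ is monotone in the completion times, the cost does not increase. Hence contiguous island-permutation schedules are without loss of optimality.

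The crux is the second step: reordering the island permutation to respect the outtree $P$ without increasing the cost. If a permutation violates $P$, there is a tree edge $X\to Y$ (parent $X$, child $Y$) with $Y$ scheduled before $X$; choosing a closest such inversion yields two adjacent islands $A,B$ (with $A$ immediately before $B$) for which $B\prec_S A$, and I would swap them. Only $C_A$ and $C_B$ change, and the later of the two affected time slots equals $T+p_A+p_B$ both before and after the swap, where $T$ is the total processing time preceding the pair. The key point is that because $B$ is an ancestor of $A$, every island having $A$ as an ancestor also has $B$ as an ancestor; therefore any island whose energization could be raised by $C_A$ increasing already sees the unchanged later slot through $B$, so $\Delta E_J\le 0$ for every $J$ (it is $0$ when both $A,B$ are ancestors of $J$, $\le 0$ when only $B$ is, and $0$ otherwise). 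As each swap strictly reduces the number of precedence inversions, iterating produces a precedence-respecting schedule of no greater cost. This monotonicity computation, together with the verification that an adjacent ancestor--descendant inversion always exists, is the most delicate part of the argument.

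Combining the two steps, $\min \sum_J \omega_J E_J$ over all single-crew repair schedules equals $\min \sum_J \omega_J C_J$ over schedules respecting $P$, which is exactly the optimum of $1\mid outtree\mid \sum w_j C_j$ on the aggregated jobs; expanding each island back into its edges in any internal order turns an optimal outtree schedule into an optimal repair schedule, while the reverse instance is obtained simply by letting each outtree job be an island containing a single edge. Since $1\mid outtree\mid\sum w_j C_j$ is solvable in polynomial time (Horn; Adolphson--Hu), the equivalence immediately yields a polynomial-time algorithm for the single-crew problem.
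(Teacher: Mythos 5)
Your overall architecture matches the paper's: make each island contiguous, collapse it into a composite job with $p_J=\sum_{j\in J}p_j$ and $\omega_J=\sum_{j\in J}w_j$, and then argue that an optimal order of composite jobs can be taken to respect $P$, at which point $E_J=C_J$ for every island and the objective collapses to $\sum_J \omega_J C_J$, i.e.\ to $1\mid outtree\mid \sum w_j C_j$. Your first step is fine and in fact cleaner than the paper's local exchange: re-sequencing all islands at once in order of their original completion times and observing that $\sum_{l\le k}p_{J_{(l)}}\le C_{J_{(k)}}$ (the machine can have done at most $C_{J_{(k)}}$ units of work by that time) gives a one-shot proof that every $C_J$, and hence every $E_J$, weakly decreases.

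The gap is in the step you yourself identify as the crux. It is simply not true that a permutation violating the outtree must contain two \emph{adjacent} islands $A,B$, with $A$ immediately before $B$ and $B\prec_S A$. Take the rooted tree with root $R$, children $A$ and $D$, and a child $B$ of $A$, and the permutation $(R,B,D,A)$: the only violated precedence is $A\prec_S B$, the offending pair is separated by $D$, and none of the adjacent pairs $(R,B)$, $(B,D)$, $(D,A)$ has its left element a strict descendant of its right element. Your swap argument is correct \emph{when} such an adjacent inversion exists (the later of the two slots is $T+p_A+p_B$ before and after, and every island seeing $C_A$ also sees $C_B$ by transitivity of ancestry), but it cannot get started here; and swapping an incomparable adjacent pair such as $(D,A)$ is not cost-safe, since it delays $C_D$ and can raise $E_{J'}$ for islands $J'$ downstream of $D$ but not of $A$. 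Repairing this requires moving a whole block rather than an adjacent pair, which is precisely what the paper does: it picks the highest job $J_y$ scheduled after one of its successors, the earliest such successor $J_x$, splits the intervening subsequence $\mathbf{S}_2$ into the jobs whose energization depends on $J_y$ ($\mathbf{S}_{2-1}$) and those depending on neither $J_x$ nor $J_y$ ($\mathbf{S}_{2-2}$), and reorders $(J_x,\mathbf{S}_2,J_y)$ into $(\mathbf{S}_{2-2},J_y,\mathbf{S}_{2-1},J_x)$, checking subsequence by subsequence, via the monotonicity of $E_J$ in the completion-time vector, that no energization time increases. Until you supply an argument of this kind, or prove that some other family of cost-safe moves always terminates at a linear extension of $P$, the reduction is not established.
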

\begin{proof}
The proof heavily relies on 2 direct results of Proposition~\ref{prop:EJCj}. Consider 2 feasible schedules, $\mathbf{S}^1$ and $\mathbf{S}^2$,

\emph{Claim 1}: For a specific island $J$, if $C_{J^\prime}^{\mathbf{S}^1} \leq C_{J^\prime}^{\mathbf{S}^2}$ for all $J^\prime \preceq_s J$, then $E_{J}^{\mathbf{S}^1} \leq E_{J}^{\mathbf{S}^2}$.

\emph{Claim 2}: For a specific island $J$, if there exists a set of composite jobs $B$ such that $\max_{J^\prime \in B} C_{J^\prime}^{\mathbf{S}^1} \leq \max_{J^\prime \in A} C_{J^\prime}^{\mathbf{S}^2}$ and $C_{J^\prime}^{\mathbf{S}^1} = C_{J^\prime}^{\mathbf{S}^2}$ for all $J \in \{ J^\prime: J^\prime \preceq_s J \} - B$, then $E_{J}^{\mathbf{S}^1} \leq E_{J}^{\mathbf{S}^2}$.

We will then prove the lemma in three steps. 

\emph{Step 1}: All lines within the same island should be sequenced without interruption. To see why, suppose that the repair sequence within some island $J$, $\{ j_1, j_2, \cdots, j_n \}$, is interrupted by some other job $j^{\prime} \in J^{\prime}$ in the optimal solution; i.e., the optimal sequence contains $\{ j_1, \cdots, j_k, j^{\prime}, j_{k+1}, \cdots, j_n \}$. We now consider two cases. If $j^{\prime}$ is the last job of its own group $J^{\prime}$, then $\{ j^{\prime}, j_1, \cdots, j_n \}$ has an objective no larger than the original one. Because compared to the original sequence, $C_{J^{\prime}}$ decreases and $C_J$ along with all other completion times remains the same, and then all the energization times, according to Claim 1, will not increase. If $j^{\prime}$ is not the last job of its own group $J^{\prime}$, then $\{ j_1, \cdots, j_n, j^{\prime} \}$ has an objective no larger than the original one. Because $C_J$ decreases and $C_{J^{\prime}}$ along with all others does not change. Similarly, all the energization times will not increase. 
%\textcolor{red}{AKD: I will need help understanding this. In the last 2 sentences, $j^{\prime}$ comes at the beginning or end. In the sentence before, $j^{\prime}$ appears in the middle. I also think yu need to consider two cases, when $J^{\prime}$ is an ancestor of $J$ and when it is a successor of $J$. I do not see how $j^{\prime}$ being the last job or not of its own group is important. --because it will affect the completion time of its own group. what about the current version?}

\emph{Step 2}: Interchanging the positions of jobs within the same island does not alter the cost function. This is obvious since all nodes in the island are energized only after the last line in that island is repaired.

Steps 1 and 2 indicate that the problem is reduced to a single-crew problem with \emph{composite jobs}, each one representing an island. The processing time of each composite job is the sum of the individual repair times of jobs within the island and its weight is equal to the sum of the individual weights; i.e.,
\begin{align}
p_J = \sum_{j \, \in \, J} p_j; \,\; \  \omega_J := \sum_{j \, \in \, J} w_j.
\end{align}

\emph{Step 3}: Having shown that the problem reduces to scheduling of composite jobs to minimize $\sum \omega_J \, E_J$, we claim that the optimal sequence of composite jobs must follow outtree precedence constraints represented by the group soft precedence graph $P$.

%\textcolor{red}{AKD: NOT YET COMPLETE. TO BE COMPLETED AFTER PARTIAL MODIFICATION.}
We prove our claim by contradiction. Assume that there exists an optimal sequence, $\pi$, that does not (completely) follow the outtree precedence constraints in $P$. Let $J_y$ be the highest composite job in $P$ that is sequenced after some of its successors in $P$ and let $J_x$ be one of those successors that appears first in $\pi$. The optimal sequence, $\pi$, therefore takes the form $( \mathbf{S}_1, J_x, \mathbf{S}_2, J_y, \mathbf{S}_3 )$, where $\mathbf{S}_1$, $\mathbf{S}_2$ and $\mathbf{S}_3$ are three possibly empty sub-sequences.
%\textcolor{red}{AKD: add `(possible empty)' after sub-sequences -- why arbitrary?}
We will construct another sequence $\pi^\prime$ by altering the positions within $( J_x, \mathbf{S}_2, J_y )$. We form 2 subsequences of $\mathbf{S}_2$ corresponding to 2 kinds of jobs while retaining the original relative order from $\mathbf{S}_2$ in each of the new subsequences, %\textcolor{red}{AKD: I do not see how the original relative order in $\mathbf{S}_2$ is retained below -- in each of the new subseq.}
\begin{itemize}
    \item[$\mathbf{S}_{2-1}$]: the jobs whose energization times depend on $J_y$. This represents the jobs that are successors of $J_y$ including the successors of $J_x$.
    \item[$\mathbf{S}_{2-2}$]: the jobs whose energization times do not depend on either $J_x$ or $J_y$. This represents the jobs that are either predecessors of $J_y$ or on other branches.
\end{itemize}
Then $\pi^\prime$ takes the form of $(\mathbf{S}_1, \mathbf{S}_{2-2}, J_y, \mathbf{S}_{2-1}, J_x, \mathbf{S}_3)$. %\textcolor{red}{AKD: this is not `altering the positions within $( J_x, \mathbf{S}_2, J_y )$', as mentioned above -- what about now..} 
Now we start to analyze how the energization time changes for jobs in each subsequence from $\pi$ to $\pi^\prime$.
\begin{itemize}
    \item[$\mathbf{S}_3$]: The completion time of each composite job in $\mathbf{S}_3$ will not be affected by any permutation prior to it so neither will their energization time.
    \item[$J_x$]: In both $\pi$ and $\pi^\prime$, its energization time takes the maximum of the completion times of $J_x$, $J_y$ and potentially some of its predecessors in $\mathbf{S}_3$. Apply Claim 2 with $B = \{J_x \} \cup \{ J_y \}$. More accurately, the maximum of the completion times of $J_y$ and $J_x$ remains the same and the completion times in $\mathbf{S}_3$ do not change. The energization time of $J_x$ will not change.
    \item[$\mathbf{S}_{2-1}$]: The energization time of any composite job $J \in \mathbf{S}_{2-1}$ depends on the completion time of 1) $J$, $J_y$, its potential predecessors in $\mathbf{S}_2 \cup \{ J_x \}$ and 2) its potential predecessors in $\mathbf{S}_3$. By Claim 2, take $B = \mathbf{S}_2 \cup \{ J_x \} \cup \{ J_y \} \cup \{ J \}$ and then the energization time of composite job $J$ in $\mathbf{S}_{2-1}$ will not increase. 
    \item[$J_y$]: Apply Claim 2 with $B = \mathbf{S}_{2-2} \cup \{ J_y\}$ and its energization will not increase.
    \item[$\mathbf{S}_{2-2}$]: the completion time of each composite job in $\mathbf{S}_{2-2}$ decreases. The energization time is not dependent on $J_x$ or $J_y$ and therefore will not increase.
    \item[$\mathbf{S}_1$]: by construction of $J_x$, there is no successor of $J_y$ in $\mathbf{S}_1$ and therefore no composite job will be affected by change of locations of $J_y$ or $J_x$ or $\mathbf{S}_{2-1}$.
\end{itemize}
Combining all the facts above, $\pi^\prime$ has a smaller cost. This contradicts the fact of $\pi$ being optimal. Consequently, the optimal sequence must follow these constraints.
\end{proof}

\subsection{A conversion algorithm and its performance bound}

An algorithm for converting the optimal single crew sequence to a multi-crew schedule is given in Algorithm~\ref{alg:swLPconvert_1_to_m}. %The algorithm is still a list scheduling algorithm with a priority list of the optimal sequence. 
\begin{algorithm}
\caption{Algorithm for converting the optimal single crew sequence to an $m$-crew schedule}
\label{alg:swLPconvert_1_to_m}
\begin{algorithmic}
\STATE \textit{Treat the optimal single crew repair sequence as a priority list, and, whenever a crew is free, assign to it the next job from the list. The first $m$ jobs from the single crew repair sequence are assigned arbitrarily to the $m$ crews.}
\end{algorithmic}
\end{algorithm}
We now prove that it has an approximation bound of  $\left(2-1/m\right)$. We begin with two lemmas that provide lower bounds on the optimal cost for an $m$-crew ($2 \leq m < \infty$) schedule, in terms of the optimal costs for a single crew schedule and an $\infty$-crew schedule. Let $H^{1,\ast}$, $H^{m,\ast}$ and $H^{\infty,\ast}$ denote the optimal costs when the number of repair crews are $1$, some arbitrary $m$ ($2 \leq m < \infty$), and $\infty$ respectively. 
\begin{proposition}
$H^{m, *} \geq \frac{1}{m} \, H^{1, *}$
\label{prop:Hm1}
\end{proposition}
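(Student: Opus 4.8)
I want to prove $H^{m,\ast} \geq \frac{1}{m}H^{1,\ast}$, i.e., that the optimal single-crew cost is at most $m$ times the optimal $m$-crew cost. The natural strategy is to take an optimal $m$-crew schedule and construct from it a feasible single-crew schedule whose cost is at most $m$ times larger; since $H^{1,\ast}$ is the minimum over all single-crew schedules, this feasible schedule's cost upper-bounds $H^{1,\ast}$ and yields the claim. Let me think about how to carry this out.

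Let me consider the optimal $m$-crew schedule achieving $H^{m,\ast}$, and let $E_J^{m,\ast}$ denote its energization times. The key observation is the relationship between $m$-crew and single-crew completion times. For any job $j$ in the optimal $m$-crew schedule, its completion time $C_j^{m,\ast}$ is at least $p_j$ and, more usefully, the work processed before any time $t$ across all $m$ crews is at most $m$ times the work a single crew could process. I would construct a single-crew schedule by processing jobs in nondecreasing order of their $m$-crew completion times $C_j^{m,\ast}$. In this single-crew schedule, the completion time of job $j$ is $\sum_{i : C_i^{m,\ast} \leq C_j^{m,\ast}} p_i$, which counts all work completed by the $m$-crew schedule up to time $C_j^{m,\ast}$; since $m$ crews working in parallel complete at most $m \cdot C_j^{m,\ast}$ units of work by that time, we get $C_j^{1} \leq m \, C_j^{m,\ast}$ for every $j$.

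Having bounded completion times jobwise, I would lift this to energization times via Proposition~\ref{prop:EJCj}. Since $E_J = \max_{J' \preceq_S J} \max_{j \in J'} C_j$ and the single-crew schedule I constructed satisfies $C_j^{1} \leq m \, C_j^{m,\ast}$ for all $j$, taking maxima over the same index set preserves the inequality, giving $E_J^{1} \leq m \, E_J^{m,\ast}$ for every island $J$. Multiplying by the nonnegative weights $\omega_J$ and summing then yields $H^{1} = \sum_J \omega_J E_J^{1} \leq m \sum_J \omega_J E_J^{m,\ast} = m \, H^{m,\ast}$. Because $H^{1,\ast} \leq H^{1}$ (the constructed schedule is feasible for one crew), we conclude $H^{1,\ast} \leq m \, H^{m,\ast}$, which rearranges to the claim.

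\textbf{The main obstacle.} The delicate point is verifying the work-accounting inequality $C_j^{1} \leq m \, C_j^{m,\ast}$ rigorously. The single-crew completion time of $j$ equals the total processing time of all jobs finishing no later than $j$ in the $m$-crew schedule, and I must argue this total is at most $m \, C_j^{m,\ast}$. This follows because each of the $m$ crews is busy for at most $C_j^{m,\ast}$ time units before completing its contribution to those jobs, so the aggregate work is bounded by $m \, C_j^{m,\ast}$; but care is needed about idle times and tie-breaking in the ordering. I also need to confirm the constructed single-crew order is feasible, which is immediate here since single-crew scheduling imposes no constraint beyond sequencing (the soft precedence constraints only affect energization times through Proposition~\ref{prop:EJCj}, not the admissible job orders). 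A cleaner alternative avoiding explicit construction would be to invoke a known general bound comparing optimal parallel-machine and single-machine objectives, but the direct construction above is self-contained and transparent.
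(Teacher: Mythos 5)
Your proposal is correct and follows essentially the same strategy as the paper: construct a single-crew list from the optimal $m$-crew schedule and use a work-conservation argument (the total processing completed by time $t$ on $m$ machines is at most $mt$) to show that every relevant time inflates by at most a factor of $m$, then sum against the weights. The only difference is granularity --- you sort individual jobs by $C_j^{m,\ast}$ and prove the jobwise bound $C_j^{1} \leq m\,C_j^{m,\ast}$ before applying Proposition~\ref{prop:EJCj}, whereas the paper sorts whole islands by their energization times $E_J^{m}$ and bounds $E_J^{1} \leq m\,E_J^{m}$ directly via a partition of the relevant jobs among the $m$ crews; both are valid, and your jobwise version is arguably cleaner since it sidesteps the paper's intermediate set-inclusion step $\{J' : E_{J'}^{m} \leq E_J^{m}\} \subseteq \{J' : C_{J'}^{m} \leq E_J^{m}\}$.
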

\begin{proof}
Given an arbitrary $m$-crew schedule $\mathbf{S}^m$ with cost $H^m$, we first construct a $1$-crew repair sequence, $\mathbf{S}^1$. We do so by sorting the \emph{energization times} of the  %groups of the damaged lines 
islands in $\mathbf{S}^m$ in an ascending order and assigning to $\mathbf{S}^1$ the sorted sequence of islands. Ties, if any, are broken according to group soft precedence constraints, or arbitrarily, if there is none. Within each island, the jobs can be permuted  arbitrarily in $\mathbf{S}^1$.

Let $C_J$ denote the completion time for island $J$. By construction, for any two islands $J$ and $J^{\prime}$ such that $J^{\prime} \prec J$, the completion time for $J^{\prime}$ can be no greater than the completion time for $J$ in $S^1$, i.e., $C_{J^{\prime}}^1 \leq C_J^1$. Therefore, $E_J^1 = C_J^1$.

Next, we claim that $E_J^1 \leq m E_J^m$, where $E_J^1$ and $E_J^m$ are the energization times of island $J$ in $\mathbf{S}^1$ and $\mathbf{S}^m$ respectively. 
%This is actually a subset of those that have smaller completion time than $E_i^m$, since completion times are not always greater than the corresponding energization times. That is,
To see why, we first observe that:
\begin{align}
E_J^1 = C_J^1 \, = \sum_{\{J^{\prime}: \ E_{J^{\prime}}^m \, \leq \, E_J^m\}} \, \sum_{j \,  \in \, J^{\prime}} p_j \, \leq \,  \sum_{\{J^{\prime}: \ C_{J^{\prime}}^m \, \leq \, E_J^m\}} \, \sum_{j \, \in \, J^{\prime}} p_j \, ,
\end{align}
where the second equality follows from the manner we constructed $S^1$ from $S^m$ and the inequality follows from the fact that all repair times are non-negative and $C_{J^{\prime}}^m \leq E_{J^{\prime}}^m \Rightarrow \{J^{\prime}: \ E_{J^{\prime}}^m \leq E_J^m\} \subseteq \{J^{\prime}: \ C_{J^{\prime}}^m \leq E_J^m\}$ for any $m$-crew schedule. For any island $J$, let $R_J$ denote the set of jobs in all such islands $J^{\prime}$ that $C_{J^{\prime}}^m \, \leq \, E_J^m$; i.e., $R_J = \bigcup_{ \{J^{\prime}: \ C_{J^{\prime}}^m \leq E_J^m\} }  J^\prime$. Partition $R_J$ into $m$ subsets, corresponding to the schedules of the $m$ crews in $\mathbf{S}^m$, as follows: $R_J = \bigcup_{k=1}^{m} R_J^k$, where $R_J^k$ is the  subset of jobs in $R_J$ that appear in the $k^{th}$ crew's schedule. It is obvious that the sum of the repair times in each $R_J^k$ can be no greater than $E_J^m$, i.e., $\sum_{j \, \in \, R_J^k} p_j \, \leq \, E_J^m$. Therefore: %\textcolor{red}{AKD: looks to me that the first inequality below should be an equality, i.e., $E_J^1 = \sum_{j \, \in \, R_J} p_j = ... $. -- (14) indicates this is an inequality.}
%of the lines 
% \forall J^{\prime} \text{ satisfies } 
%
\begin{align}
  E_J^1 \, \leq \, \sum_{j \, \in \, R_J} p_j \, = \, \sum_{k=1}^m \, \left(\sum_{j \, \in \, R_J^k} p_j\right) \, \leq \, m E_J^m
\end{align}
Proceeding with the optimal $m$-crew schedule $\mathbf{S}^{m,\ast}$, it is easy to see that $E_J^1 \leq m E_J^{m,\ast}$. Then, 
\begin{align}
%H^{m, *} &= \sum_{J \in \mathcal{J}} \Omega_J E_J^{m,*} \geq \sum_{J \in \mathcal{J}} \Omega_J \, \frac{1}{m} \, E_J^1 \\
%&= \frac{1}{m} \, H^1 \geq \frac{1}{m} \, H^{1, *}
H^{m, \ast} &= \, \sum_{J} \omega_J \, E_J^{m,\ast} \, \geq \, \sum_{J \in \mathcal{J}} \omega_J \, \frac{E_J^1}{m} \, = \, \frac{1}{m} \, H^1 \, \geq \, \frac{1}{m} \, H^{1, \ast} \label{eqn:lemma2LastEqn}
\end{align}
%\todo{I think by defn is not necessary, or if you think it is necessary to emphasize, let's do it in the paragraph before the lemma}
\end{proof}
\begin{proposition}
$H^{m,\ast} \geq H^{\infty,\ast}$
\label{prop:Hminf}
\end{proposition}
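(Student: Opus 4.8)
The plan is to prove the inequality by a straightforward feasibility-containment (monotonicity) argument: relaxing the number of crews from $m$ to $\infty$ only enlarges the set of admissible schedules, so the optimal cost cannot increase. The key observation is that a schedule using at most $m$ crews simultaneously is, a fortiori, a valid schedule when infinitely many crews are available.

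First I would take the optimal $m$-crew schedule $\mathbf{S}^{m,\ast}$ and reinterpret it as an $\infty$-crew schedule simply by leaving all but $m$ of the crews idle (equivalently, by assigning jobs only to crews $1,\dots,m$). Since every job retains its original start and completion time under this reinterpretation, the completion-time vector $C$ is unchanged; by Proposition~\ref{prop:EJCj} the induced energization times $E_J$ are likewise unchanged, and hence so is the objective $H = \sum_J \omega_J E_J$. Thus $\mathbf{S}^{m,\ast}$ is a feasible $\infty$-crew schedule whose cost equals $H^{m,\ast}$. I would then appeal to optimality of $H^{\infty,\ast}$: since $H^{\infty,\ast}$ is the minimum cost over all feasible $\infty$-crew schedules and the schedule just constructed is one such feasible schedule, we obtain $H^{\infty,\ast} \leq H^{m,\ast}$, which is exactly the claim. (One may note in passing that the $\infty$-crew problem is well posed because $\mathcal{J}$ is finite, so at most $\vert \mathcal{J} \vert$ crews are ever active; in fact every job can begin at time $0$, giving $C_j = p_j$, though this explicit description is not needed for the bound.)

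There is essentially no technical obstacle here — the entire content is the nesting of feasible sets $\{m\text{-crew schedules}\} \subseteq \{\infty\text{-crew schedules}\}$ together with the invariance of the objective under this embedding. The only point that warrants a sentence of care is verifying that the embedding preserves the completion times and therefore, via Proposition~\ref{prop:EJCj}, the energization times and the cost; once that is stated, the inequality follows immediately by minimizing over the larger feasible set.
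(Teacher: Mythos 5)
Your proof is correct, but it takes a different (and arguably cleaner) route than the paper. You argue by feasible-set containment: every $m$-crew schedule is also a feasible $\infty$-crew schedule with the same completion times, hence the same energization times and cost by Proposition~\ref{prop:EJCj}, so minimizing over the larger set can only lower the optimum. The paper instead compares the two optima pointwise: it first characterizes the $\infty$-crew optimum explicitly, noting that every job can start at time $0$ so that $C_j^{\infty} = p_j$ and $E_J^{\infty} = \max_{J' \preceq_S J}\max_{j \in J'} p_j$, and then observes that any $m$-crew schedule satisfies $C_j^{m,\ast} \geq p_j = C_j^{\infty}$, which propagates through the max in Proposition~\ref{prop:EJCj} to give $E_J^{m,\ast} \geq E_J^{\infty}$ island by island, and hence $H^{m,\ast} \geq H^{\infty,\ast}$. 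Your argument is more general --- the same embedding shows $H^{m,\ast} \geq H^{m',\ast}$ for any $m \leq m'$ without needing to know the relaxed optimum --- whereas the paper's argument has the side benefit of exhibiting the closed form of $E_J^{\infty}$, which is reused later in the proofs of Theorem~\ref{thm:ejm} and the $\left(2-\frac{1}{m}\right)$ bound. Either proof is acceptable for the proposition as stated.
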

\begin{proof}
This is intuitive since the cost is minimal when the number of repair crews is at least equal to the number of damaged lines. Formally, in the $\infty$-crew case, every job can be assigned to one crew. Therefore, for any damaged line $j \in \mathcal{J}$, $C^{\infty}_j = p_j$ and, by Proposition~\ref{prop:EJCj}, $E^{\infty}_J = \underset{J^{\prime} \, \preceq_S \, J}{\max} \,\; \underset{j \, \in \,  J^{\prime}}{\max} \;\, C^{\infty}_j = \underset{J^{\prime} \, \preceq_S \, J}{\max} \,\; \underset{j \, \in \,  J^{\prime}}{\max} \;\ p_j$. Also, $C^{m,\ast}_j \geq p_j = C^{\infty}_j$. Consequently, $E^{m,\ast}_J = \underset{J^{\prime} \, \preceq_S \, J}{\max} \,\; \underset{j \, \in \,  J^{\prime}}{\max} \;\, C^{m,\ast}_j \geq \underset{J^{\prime} \, \preceq_S \, J}{\max} \,\; \underset{j \, \in \,  J^{\prime}}{\max} \;\, C^{\infty}_j = E^{\infty}_j$. Therefore:
\begin{align}
H^{m, \ast} &= \, \sum_{J} \omega_J \, E_J^{m,\ast} \, \geq \,  \sum_{J} \omega_J \, E^{\infty}_J \, = \, H^{\infty,\ast}
\end{align}
\end{proof}
%
%$\forall J \in \mathcal{J}$
\begin{theorem}
Let $E_J^m$ denote the energization time of any island $J$ after the conversion algorithm is applied to the optimal single crew repair schedule. Then: 
\[E_{J}^{m} \leq \frac{1}{m} \, E_{J}^{1,\ast} + \frac{m-1}{m} \, E^{\infty,\ast}_J.\]
\label{thm:ejm}
\end{theorem}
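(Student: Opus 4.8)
The plan is to reduce the island-level statement to a statement about the single ``critical'' job that determines $E_J^m$, and then bound that job's completion time using the standard no-idle-time property of list scheduling. By Proposition~\ref{prop:EJCj}, $E_J^m = \max_{J^\prime \preceq_S J} \max_{j \in J^\prime} C_j^m$; I would let $j^\star$ be a job attaining this maximum and $J^\star \preceq_S J$ the island containing it, so that $E_J^m = C_{j^\star}^m = S_{j^\star}^m + p_{j^\star}$. Everything then hinges on bounding $S_{j^\star}^m$ and reinterpreting the two resulting terms.

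First I would bound the start time of $j^\star$. Since the conversion algorithm (Algorithm~\ref{alg:swLPconvert_1_to_m}) is a list schedule with no hard precedence constraints, no crew is ever idle while an unassigned job remains; in particular, every crew is busy throughout $[0, S_{j^\star}^m)$, so the total work executed in that interval equals $m \, S_{j^\star}^m$ and cannot exceed the total processing time of the jobs preceding $j^\star$ in the priority list. Writing the list (the optimal single-crew sequence) as $(1, 2, \dots)$ and letting $k$ be the position of $j^\star$, this yields $S_{j^\star}^m \le \tfrac{1}{m}\sum_{i=1}^{k-1} p_i$, exactly as in eqn.~(\ref{eqn:S<=sump}). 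Hence
\[
C_{j^\star}^m \;\le\; \frac{1}{m}\sum_{i=1}^{k-1} p_i + p_{j^\star} \;=\; \frac{1}{m}\sum_{i=1}^{k} p_i + \frac{m-1}{m}\, p_{j^\star}.
\]

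The remaining step is to recognize each of the two terms. Because the priority list is processed sequentially by a single crew with no idle time, $\sum_{i=1}^{k} p_i$ is precisely the completion time $C_{j^\star}^{1,\ast}$ of $j^\star$ in the optimal single-crew schedule; since $j^\star \in J^\star \preceq_S J$, Proposition~\ref{prop:EJCj} gives $C_{j^\star}^{1,\ast} \le E_J^{1,\ast}$, so the first term is at most $\tfrac{1}{m} E_J^{1,\ast}$. For the second term, the characterization of the $\infty$-crew optimum established in the proof of Proposition~\ref{prop:Hminf}, namely $E_J^{\infty,\ast} = \max_{J^\prime \preceq_S J}\max_{j \in J^\prime} p_j$, gives $p_{j^\star} \le E_J^{\infty,\ast}$. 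Substituting both bounds produces $E_J^m \le \tfrac{1}{m} E_J^{1,\ast} + \tfrac{m-1}{m} E_J^{\infty,\ast}$, as claimed. (I would also note that summing this island inequality weighted by $\omega_J$ and then invoking Propositions~\ref{prop:Hm1} and~\ref{prop:Hminf} recovers the advertised $\left(2 - \tfrac{1}{m}\right)$ bound on the total cost.)

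The delicate point I expect is the start-time bound: one must argue carefully that all $m$ crews are genuinely busy throughout $[0, S_{j^\star}^m)$ — this is where the absence of hard precedence constraints is essential, since the soft precedence constraints affect only the objective and never prevent a free crew from picking up the next listed job — and that the work executed in that interval is attributable entirely to jobs appearing before $j^\star$ in the list. Everything else is bookkeeping that follows directly from Proposition~\ref{prop:EJCj} and from the fact that the list is exactly the optimal one-crew sequence, whose completion times are partial sums of processing times.
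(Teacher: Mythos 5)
Your proposal is correct and follows essentially the same route as the paper: both rest on the list-scheduling start-time bound $S_j^m \le \frac{1}{m}\sum_{i \prec j} p_i$, the identification of the resulting partial sum with $C_j^{1,\ast}$ (so that $C_j^m \le \frac{1}{m}C_j^{1,\ast} + \frac{m-1}{m}p_j$), and an application of Proposition~\ref{prop:EJCj} together with $E_J^{\infty} = \max_{J^\prime \preceq_S J}\max_{j \in J^\prime} p_j$. The only cosmetic difference is that you instantiate the maximizing job $j^\star$ and bound its two terms individually, whereas the paper bounds the max of the sum by the sum of the maxes; the two are interchangeable here.
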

\begin{proof}
Let $S_j^m$ and $C_j^m$  denote respectively the start and energization times of some job $j \in \mathcal{J}$ in the $m$-crew repair schedule, $S^m$, obtained by applying Algorithm~\ref{alg:swLPconvert_1_to_m} to the optimal $1$-crew sequence, $S^{1,\ast}$. Also, let $\mathcal{I}_j$ denote the position of line $j$ in $S^{1,\ast}$ and define $ R_j := \{k: \mathcal{I}_k < \mathcal{I}_j\}$. Then:
\begin{align}
C_j^m &= S_j^m + p_j \\
&\leq \frac{1}{m} \, \sum_{i \, \in \, R_j} p_i + p_j \\
&= \frac{1}{m} \, \sum_{i \, \in \, R_j \, \cup \, j } p_i + \frac{m-1}{m} \, p_j \\
& = \frac{1}{m} \, C_j^{1,\ast} + \frac{m-1}{m} \, p_j,
\end{align}
where the inequality follows from eqn.~(\ref{eqn:S<=sump}). Using the above result and Proposition~\ref{prop:EJCj}:
\begin{align}
E_J^m &= \underset{J^{\prime} \, \preceq_S \, J}{\max} \,\; \underset{j \, \in \,  J^{\prime}}{\max} \;\,  C_j^m \\
&\leq \underset{J^{\prime} \, \preceq_S \, J}{\max} \,\; \underset{j \, \in \,  J^{\prime}}{\max} \;\, \frac{1}{m} \, C_j^{1,\ast} + \underset{J^{\prime} \, \preceq_S \, J}{\max} \,\; \underset{j \, \in \,  J^{\prime}}{\max} \;\, \frac{m-1}{m} \, p_j \\
&= \frac{1}{m} \left[ \underset{J^{\prime} \, \preceq_S \, J}{\max} \,\; \underset{j \, \in \,  J^{\prime}}{\max} \;\, C_j^{1,\ast} \right] + \frac{m-1}{m} \left[ \underset{J^{\prime} \, \preceq_S \, J}{\max} \,\; \underset{j \, \in \,  J^{\prime}}{\max} \;\, C_j^{\infty}\right] \\
&= \frac{1}{m} \, E_{J}^{1,\ast} + \frac{m-1}{m} \, E^{\infty}_J \label{eqn:th3LastEqn}
\end{align}
\end{proof}
\begin{theorem}
%The conversion algorithm is a $\left(2 - \frac{1}{m}\right)$-approximation.
The approximation bound of the single to multi-crew conversion algorithm in  Algorithm~\ref{alg:swLPconvert_1_to_m} is $\left(2 - \frac{1}{m}\right)$.
\end{theorem}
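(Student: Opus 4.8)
The plan is to assemble the claimed bound directly from the three quantitative results already in hand, so the argument is essentially a weighted-sum bookkeeping step followed by substitution of two lower bounds. First I would write the cost of the schedule produced by Algorithm~\ref{alg:swLPconvert_1_to_m} as $H^m = \sum_{J \in \mathcal{J}} \omega_J \, E_J^m$, where $E_J^m$ is the energization time analyzed in Theorem~\ref{thm:ejm}. That theorem gives, for every island $J$, the per-island decomposition $E_J^m \leq \tfrac{1}{m} E_J^{1,\ast} + \tfrac{m-1}{m} E_J^{\infty,\ast}$. Since each island weight $\omega_J = \sum_{j \in J} w_j$ is nonnegative, I would multiply this inequality by $\omega_J$ and sum over all $J \in \mathcal{J}$; the two terms on the right collect into the weighted sums $\sum_J \omega_J E_J^{1,\ast} = H^{1,\ast}$ and $\sum_J \omega_J E_J^{\infty,\ast} = H^{\infty,\ast}$. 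This yields the intermediate bound
\begin{align}
H^m \;\leq\; \frac{1}{m}\, H^{1,\ast} \;+\; \frac{m-1}{m}\, H^{\infty,\ast}.
\label{eqn:convintermed}
\end{align}

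The second step is to eliminate the single-crew and infinite-crew optimal costs in favor of $H^{m,\ast}$. Proposition~\ref{prop:Hm1} rearranges to $H^{1,\ast} \leq m\, H^{m,\ast}$, and Proposition~\ref{prop:Hminf} gives $H^{\infty,\ast} \leq H^{m,\ast}$ directly. Substituting both into~(\ref{eqn:convintermed}) collapses the two coefficients: the first term contributes $\tfrac{1}{m}\cdot m\, H^{m,\ast} = H^{m,\ast}$ and the second contributes $\tfrac{m-1}{m} H^{m,\ast}$, so that
\begin{align}
H^m \;\leq\; H^{m,\ast} + \frac{m-1}{m}\, H^{m,\ast} \;=\; \left(2 - \frac{1}{m}\right) H^{m,\ast}.
\end{align}
Because $H^m$ is the cost delivered by Algorithm~\ref{alg:swLPconvert_1_to_m} and $H^{m,\ast}$ is the optimal $m$-crew cost, this is exactly the asserted $\left(2 - \tfrac{1}{m}\right)$-approximation guarantee.

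There is no serious analytical obstacle remaining, since all the difficult work was front-loaded into Lemma~\ref{lemma:swsingleequiv} (optimality of the single-crew sequence via the outtree reduction) and into Theorem~\ref{thm:ejm}, whose convex-combination structure is precisely what makes the two independent lower bounds of Propositions~\ref{prop:Hm1} and~\ref{prop:Hminf} combine cleanly. The only points I would verify carefully are bookkeeping rather than conceptual: that the infinite-crew energization times are uniquely determined (each job completes at $C_j^\infty = p_j$, so $E_J^{\infty,\ast} = E_J^\infty$ and the $\infty$-crew term in Theorem~\ref{thm:ejm} really does aggregate to $H^{\infty,\ast}$), and that the weights $\omega_J$ used in forming $H^m$, $H^{1,\ast}$, and $H^{\infty,\ast}$ are the same composite-island weights throughout, so that the termwise multiplication by $\omega_J$ is legitimate and the tightness of the bound (a coefficient that genuinely interpolates between $1$ at $m=1$ and $2$ as $m\to\infty$) is preserved.
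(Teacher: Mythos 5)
Your proposal is correct and follows essentially the same route as the paper's own proof: multiply the per-island bound of Theorem~\ref{thm:ejm} by $\omega_J$, sum over islands to get $H^m \leq \frac{1}{m} H^{1,\ast} + \frac{m-1}{m} H^{\infty,\ast}$, and then substitute the lower bounds $H^{1,\ast} \leq m H^{m,\ast}$ and $H^{\infty,\ast} \leq H^{m,\ast}$ from Propositions~\ref{prop:Hm1} and~\ref{prop:Hminf}. Your side remarks on the determinacy of the $\infty$-crew energization times and the consistency of the island weights are sound but not needed beyond what the paper already establishes.
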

%.
\begin{proof}
The result follows from Proposition~\ref{prop:Hm1}, \ref{prop:Hminf} and Theorem~\ref{thm:ejm}.
\begin{alignat}{2}
H^m &= \sum_{J} \omega_J \, E_J^{m} && \\
&\leq \sum_{J} \omega_J \,  \left[\frac{1}{m} E_{J}^{1, \ast} + \frac{m-1}{m}E^{\infty}_J\right]   \\
&= \frac{1}{m} \, \sum_{J} \omega_J \, E_{J}^{1, \ast} + \frac{m-1}{m} \, \sum_{J} \omega_J \, E^{\infty}_J && \\
&= \frac{1}{m} \, H^{1, \ast} + \frac{m-1}{m} \, H^{\infty, \ast}   && \\
&\leq \frac{1}{m} \, \left(m \, H^{m, \ast}\right) + \frac{m-1}{m} \, H^{m, \ast} \\
&= \left(2 - \frac{1}{m}\right) \, H^{m, \ast} &&
\end{alignat}
\end{proof}

% \textcolor{red}{AKD: have not looked at the rest}
% \begin{remark}
% \todo{re-evaluate the technical meaning of this argument}
% Algorithm~\ref{alg:swLPconvert_1_to_m} in some sense imposes a group precedence constraint defined as follows.

% If there is a group precedence constraint $J_{k_1} \prec J_{k_2}$ for any $k_1, k_2 \in \{1, \cdots n \}$, it must satisfy that for any $j_1 \in J_{k_1}$ and $j_2 \in J_{k_2}$, $S_{j_1} \leq S_{j_2}$.

% However, such a constraint is not always satisfied in the optimal solution. For example, in Fig.~(\ref{fig:counterexgroupprec}), where $1$, $1^{\prime}$ and $1^{\prime \prime}$ belongs to a group $1$ and $2$ is a standalone job, the schedule at the bottom will outperforms if $2$ is the child of group $1$ and $\Omega_2 \gg \Omega_1$.
% \begin{figure}[htbp]
% \centering
% \includegraphics[width = \columnwidth]{counterexamplegroupprecedence.pdf}
% \caption{Case when group precedence is not optimal}
% \label{fig:counterexgroupprec}
% \end{figure}
% \end{remark}

As mentioned in proof of Lemma~\ref{lemma:swsingleequiv}, interchanging positions within the same group does not affect the scheduling objective in sequencing repairs with one repair team. And by Algorithm~\ref{alg:swLPconvert_1_to_m}, interchanging positions within the same group does not affect the performance guarantee. In fact, Algorithm~\ref{alg:swLPconvert_1_to_m} does not specify a schedule but rather a family of schedules.

Recall that Graham's \emph{List} scheduling algorithm with arbitrary priority list is a $2 - \frac{1}{m}$ approximation for makespan minimization. If there is only one island, the problem is equivalent to minimizing the makespan. Now that changing positions within the island does not affect the approximation ratio, $2 - \frac{1}{m}$ is the tightest bound possible for Algorithm~\ref{alg:swLPconvert_1_to_m}.

Although the performance bound could not be improved by finding the best within-group schedule, the objectives might be improved for some cases.

\section{Conclusion and Future Work}

In this paper, we introduced the problem of post-disaster repair scheduling in partially automated electricity distribution networks and solved it by 2 algorithms with constant factor guarantees on the worst case performances. 
%The first method, based on LP-relaxation, is proven to be a $2$-approximation algorithm. The second method converts the optimal single crew schedule, solvable in polynomial time, to an arbitrary $m$-crew schedule with a performance bound of $\left(2-\frac{1}{m}\right)$. The third method, based on $\rho$-factors which can be interpreted as component importance measures, is shown to be equivalent to the conversion algorithm. 

%Although we have focused on electricity distribution networks, the heuristic algorithms can also be applied to any infrastructure network with a radial structure (e.g., water distribution networks). 
The solution approach paves the way for planning the installation of switches in resilient distribution networks given a budget constraint. And a cost comparison with the component hardening approach~\citep{tan2018distribution} will also be very useful for utilities to make planning decisions.
Another major direction of our future work includes development of efficient algorithms which can be applied to distribution networks with both normally closed and normally open switches, allowing for network reconfiguration. This capability will allow some nodes to be connected to a substation via more than one path~\citep{baran1989network}.  

\section*{References}

\bibliography{switch}

\end{document}